\newtheorem*{thm}{Theorem}
\newtheorem{lemma}{Lemma}
\newtheorem*{proposition}{Proposition}
\begin{document}

\title[]{A Stability Version of the \\Jones Opaque Set Inequality}

\author[]{Stefan Steinerberger}
\address{Department of Mathematics, University of Washington, Seattle}
\email{steinerb@uw.edu}

\begin{abstract}  Let $\Omega \subset \mathbb{R}^2$ be a bounded, convex set. A set $\mathcal{O} \subset \mathbb{R}^2$ is an opaque set (for $\Omega$) if every line that intersects $\Omega$ also intersects $\mathcal{O}$. What is the minimal possible length $L$ of an opaque set? The best lower bound $L \geq |\partial \Omega|/2$ is due to Jones (1962). It has been remarkably difficult to improve this bound, even in special cases where it is presumably very far from optimal. We prove a stability version: if $L - |\partial \Omega|/2$ is small, then any corresponding opaque set $\mathcal{O}$ has to be made up of curves whose tangents behave very much like the tangents of the boundary $\partial \Omega$ in a precise sense.
\end{abstract}

\maketitle

\section{Introduction and Result}

\subsection{Introduction}

Let $\Omega \subset \mathbb{R}^2$ be a bounded convex domain. An \textit{opaque set} $\mathcal{O} \subset \mathbb{R}^2$ is a set such that every line that intersects $\Omega$ also intersects $\mathcal{O}$. How short, depending on $\Omega$, can such an opaque set be? First introduced by Mazurkiewicz \cite{maz} in 1916, these sets remain poorly understood; the problem is open for all $\Omega$.

\begin{center}
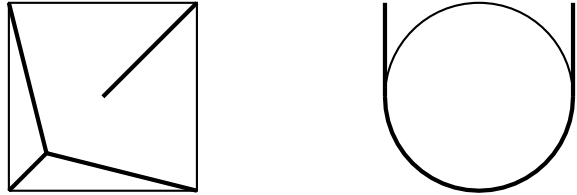
\begin{figure}[h!]
\begin{tikzpicture}[scale=2.5]
\draw [thick] (4-1.5, 0.5) circle (0.5cm);
   \draw [ultra thick,domain=180:360] plot ({4-1.5 + 0.5*cos(\x)}, {0.5 + 0.5*sin(\x)});
   \draw [ultra thick] (4-1, 0.5) -- (4-1,1);
      \draw [ultra thick] (4-2, 0.5) -- (4-2,1);
\draw [thick] (0,0) -- (1,0) -- (1,1) -- (0,1) -- (0,0);
\draw [ultra thick] (0, 1) -- (0.2, 0.2) -- (1, 0);
\draw [ultra thick] (0, 0) -- (0.2, 0.2);
\draw [ultra thick] (0.5, 0.5) -- (1,1);
\end{tikzpicture}
\caption{Conjectured shortest opaque set for $[0,1]^2$ with length $\sqrt{2} + \sqrt{3/2} \sim 2.63$ (left) and opaque set for the unit disk (right).}
\end{figure}
\end{center}
\vspace{-20pt}
In the case of $\Omega = [0,1]^2$, the trivial lower bound $\sqrt{2}$ was improved by Bagemihl \cite{bag} to $\pi/2$ and then by Jones \cite{jones} in his PhD thesis to $2$. Jones' relatively simple argument generalizes nicely and gives a universal estimate. 

\begin{thm}[Jones, 1962] If $\Omega \subset \mathbb{R}^2$ is convex, then any opaque set has length
$$ L \geq \frac{|\partial \Omega|}{2}.$$
\end{thm}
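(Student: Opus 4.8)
The plan is to run an integral-geometric argument of Cauchy--Crofton type, phrased through one-dimensional projections. For $\theta \in [0,\pi)$ let $\pi_\theta \colon \mathbb{R}^2 \to \mathbb{R}$ denote orthogonal projection onto the line in direction $(\cos\theta,\sin\theta)$, and set $w(\theta) = \mathcal{H}^1(\pi_\theta(\Omega))$. Since $\Omega$ is convex, $\pi_\theta(\Omega)$ is an interval of length $w(\theta)$; moreover, a line perpendicular to the direction $\theta$ meets $\Omega$ precisely when its $\pi_\theta$-image (a single point) lies in that interval. The first step is a local covering observation: if $\mathcal{O}$ is opaque then every line perpendicular to direction $\theta$ that meets $\Omega$ also meets $\mathcal{O}$, which in projected form says exactly $\pi_\theta(\Omega) \subseteq \pi_\theta(\mathcal{O})$, hence
$$ \mathcal{H}^1\big(\pi_\theta(\mathcal{O})\big) \;\geq\; w(\theta) \qquad \text{for every } \theta \in [0,\pi). $$

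Integrating this over $\theta$ and invoking Cauchy's formula $\int_0^\pi w(\theta)\,d\theta = |\partial\Omega|$ for the perimeter of a convex body yields $\int_0^\pi \mathcal{H}^1(\pi_\theta(\mathcal{O}))\,d\theta \geq |\partial\Omega|$. To finish, one bounds the left-hand side from above by $2L$: a segment of length $s$ has $\int_0^\pi \mathcal{H}^1(\pi_\theta(\text{segment}))\,d\theta = 2s$, and summing over a polygonal approximation (the projections of distinct pieces may overlap, which only helps) gives $\int_0^\pi \mathcal{H}^1(\pi_\theta(E))\,d\theta \leq 2\,\mathcal{H}^1(E)$ for any rectifiable $E$, so in particular $\int_0^\pi \mathcal{H}^1(\pi_\theta(\mathcal{O}))\,d\theta \leq 2\,\mathcal{H}^1(\mathcal{O}) = 2L$. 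Combining the two inequalities gives $2L \geq |\partial\Omega|$.

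The one genuine subtlety --- the main obstacle --- is the last step in full generality: an opaque set need only have finite $\mathcal{H}^1$-measure and may be wildly irregular, even purely unrectifiable, so the Cauchy--Crofton identity is not literally available. What saves the argument is that only the inequality $\int_0^\pi \mathcal{H}^1(\pi_\theta(\mathcal{O}))\,d\theta \leq 2\,\mathcal{H}^1(\mathcal{O})$ is needed, and this holds for every $\mathcal{H}^1$-measurable set because the integral-geometric measure never exceeds the Hausdorff measure; one also checks that $\theta \mapsto \mathcal{H}^1(\pi_\theta(\mathcal{O}))$ is measurable. Alternatively --- and this is presumably the route that feeds the stability refinement --- one first reduces to $\mathcal{O}$ closed and then to a finite union of rectifiable arcs, for which all the inequalities above become exact identities; the stability statement should then follow by quantifying the defect in the covering inclusion $\pi_\theta(\Omega) \subseteq \pi_\theta(\mathcal{O})$ and in the projection identity $\int_0^\pi \mathcal{H}^1(\pi_\theta(E))\,d\theta = 2\,\mathcal{H}^1(E)$, the latter being an equality only when the tangent directions of $\mathcal{O}$ are distributed like those of $\partial\Omega$.
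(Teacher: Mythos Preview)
Your argument is correct and is essentially the paper's: the shadow of $\mathcal{O}$ in each direction covers that of $\Omega$, Cauchy's formula converts the integrated width of $\Omega$ into the perimeter, and the integrated shadow of $\mathcal{O}$ is bounded by $2L$. The one presentational difference is that the paper uses the multiplicity-counted shadow $g(\theta)=\int_0^{2\pi}|\cos(\theta-\alpha)|\,d\mu_{\mathcal{O}}(\alpha)$ (for which $\int_0^{2\pi} g\,d\theta=4L$ is an \emph{exact} identity) rather than $\mathcal{H}^1(\pi_\theta(\mathcal{O}))$; it is the single defect $g(\theta)-f(\theta)$, viewed as the convolution $|\cos|\ast(\mu_{\mathcal{O}}-\mu_{\partial\Omega})$ and analyzed via Fourier series, that drives the stability theorem --- not the two separate defects you speculate about in your final sentence.
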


This bound appears to be difficult to improve. For $\Omega = [0,1]^2$, the best lower bound is $L \geq 2.0002$ due to  Kawamura, Moriyama, Otachi \& P\'ach \cite{kawa}. For the equilateral triangle with side length 1 the Jones bound guarantees $L \geq 3/2$ and the best known result is $L \geq 3/2 + 5 \cdot 10^{-13}$ due to Izumi \cite{iz}.
 The case of the unit disk is particularly tricky, it is sometimes known as the \textit{beam detection constant} problem \cite{finch}: it is listed as problem A30 in \cite{croft} and discussed in a 1995 issue of \textit{Scientific American} \cite{stewart}, see also \cite{joris, schneider}. The best known bounds are  $\pi < L \leq 4.8$, no significant improvement over Jones' bound is known.

\begin{center}
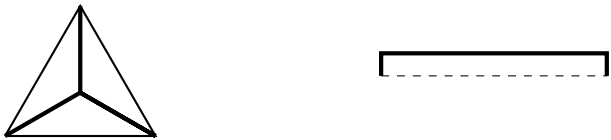
\begin{figure}[h!]
\begin{tikzpicture}[scale=1]
\draw [dashed] (0,0.8) -- (3,0.8) -- (3, 1.1) -- (0, 1.1) -- (0,0.8);
\draw [ultra thick] (3,0.8) -- (3,1.1) -- (0,1.1)-- (0,0.8);
 \draw [thick] (5-10,0) -- (7-10,0) -- (6-10, 1.73) -- (5-10,0);
 \draw [ultra thick] (5-10,0) -- (6-10, 1.73/3) -- (7-10,0) -- (6-10, 1.73/3) -- (6-10, 1.73);
\end{tikzpicture}
\caption{Conjectured shortest opaque set for the equilateral triangle with length $\sqrt{3}$ (left) and three sides of a rectangle (right).}
\end{figure}
\end{center}
\vspace{-20pt}

The constant $1/2$ in $L \geq |\partial \Omega|/2$ is optimal which can be seen by taking a $1 \times \varepsilon$ rectangle where $0 < \varepsilon \ll 1$. Taking three sides of the rectangle gives an opaque set with length $1+2\varepsilon$ while the Jones bound guarantees $L \geq 1+\varepsilon$. However, it should usually be far from optimal. Many of these questions have been extensively studied, we refer to \cite{asi, brakke, dumi0, dumi, dumi1, dumi2, faber, faber2, kawohl, kawohl2, kawohl3, maz, sen}.

\subsection{Main Result} In what follows, we consider opaque sets $\mathcal{O}$ that are given as a countable union of line segments. A limiting argument shows that this is not an actual restriction, see \cite[Lemma 1]{dumi1} or \cite[Lemma 4]{kawa}, but it simplifies exposition. We prove a stability version of Jones' inequality. If the Jones bound is nearly attained, then it will be shown that this tells us a lot about the structure of the corresponding opaque set $\mathcal{O}$: the orientation of the line segments in $\mathcal{O}$ has to be very similar to the orientation of the boundary $\partial \Omega$.

\begin{center}
    \begin{figure}[h!]
        \begin{tikzpicture}
            \draw [ultra thick,red] (0,0) -- (1.5,0);
           \draw [ultra thick, brown] (-1,0) -- (1,2);
           \draw [ultra thick, blue] (-1, 1) -- (-1,2);    
              \draw [ultra thick, blue] (1.2, 1) -- (1.2,1.5);    
           \draw [thick] (4,0) -- (8,0);
           \draw [thick] (4, -0.1) -- (4, 0.1);
         \draw [thick] (5, -0.1) -- (5, 0.1);
         \draw [thick] (6, -0.1) -- (6, 0.1);
            \draw [thick] (7, -0.1) -- (7, 0.1);  
            \draw [thick] (8, -0.1) -- (8, 0.1);
          \node at (4, -0.3) {0}; 
              \node at (5, -0.3) {$\pi/2$};
          \node at (6, -0.3) {$\pi$};
             \node at (7, -0.3) {$3\pi/2$};
          \node at (8, -0.3) {$2\pi$};
             \draw [ultra thick, red] (4,0) -- (4, 0.75); 
                      \draw [ultra thick, brown] (4.5,0) -- (4.5, 1.41); 
                 \draw [ultra thick, blue] (5,0) -- (5, 0.75);
          \draw [ultra thick, red] (6,0) -- (6, 0.75); 
                      \draw [ultra thick, brown] (6.5,0) -- (6.5, 1.41); 
          \draw [ultra thick, blue] (7,0) -- (7, 0.75); 
        \end{tikzpicture}
        \caption{A set of line segments $\mathcal{O}$ (colored by angle) and the corresponding `angular orientation' measure $\mu_{\mathcal{O}}$.}
        \label{fig:enter-label}
    \end{figure}
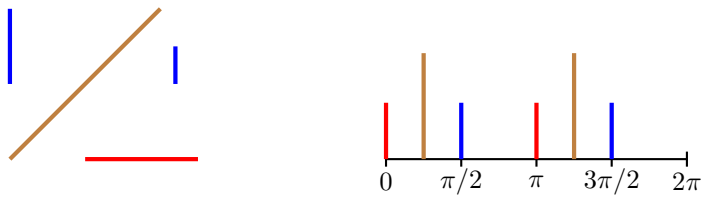
\end{center}

For each set of line segments $\mathcal{O} \subset \mathbb{R}^2$, we construct an associated measure $\mu_{\mathcal{O}}$ on $[0,2\pi) \cong \mathbb{T} \cong \mathbb{S}^1$ as follows: for any line segment $\ell_i$ of length $|\ell_i|$ making angle $0 \leq \alpha \leq 2\pi$ with the $x-$axis in the usual counterclockwise orientation, we add the two scaled Dirac measures $|\ell_i|/2 \cdot \delta_{\alpha}$ and $|\ell_i|/2 \cdot \delta_{\pi + \alpha}$ to the measure $\mu_{\mathcal{O}}$. This is to account for the fact that any line segment `equally points in two directions'. By construction, the total measure is exactly the length of all line segments
$ \mu_{\mathcal{O}}([0,2\pi)) = \mathcal{H}^1(\mathcal{O}).$
The measure $\mu_{\mathcal{O}}$ is uniquely determined by the opaque set $\mathcal{O}$. We construct a second measure $\mu_{\partial \Omega}$ by applying the same construction procedure to the boundary of the convex set $\partial \Omega$ and then scaling everything down by a factor of 2. If $\partial \Omega$ is piecewise linear, the procedure is identical; if not, then we perform a limit procedure which is easy to do since Rademacher's Theorem guarantees that the boundary of a convex set is piecewise differentiable almost everywhere. Because of the rescaling by a factor of 2 (whose purpose will become clear later, in Lemma 2), we have
$ \mu_{\partial \Omega}([0,2\pi)) = |\partial \Omega|/2.$
 For example, if $\Omega$ is the unit disk, then $\mu_{\partial \Omega} = dx/2$ is the rescaled Lebesgue measure. In the case of $\Omega = [0,1]^2$, we have
$$ \mu_{\partial\Omega} = \frac{\delta_0 + \delta_{\pi/2} + \delta_{\pi} + \delta_{3\pi/2}}{2}.$$
We can now state our main result which, informally speaking, says that if an opaque set $\mathcal{O}$ has length $L$ very close to $|\partial \Omega|/2$, then $\mu_{\mathcal{O}}$ and $\mu_{\partial \Omega}$ are very similar measures and their distance can be quantified in a suitable Sobolev space.

\begin{thm} Let $\Omega \subset \mathbb{R}^2$ be a bounded convex domain, let $\mathcal{O}$ be an opaque set of length $L$ and let $\mu_{\mathcal{O}}$ and $\mu_{\partial \Omega}$ be the associated measures. Then
$$ \left\| \mu_{\mathcal{O}} - \mu_{\partial \Omega} \right\|_{\dot H^{-2}(\mathbb{T})} \leq  \frac{ L^{1/4}}{\sqrt{2}} \cdot \left(L - \frac{|\partial \Omega|)}{2}\right)^{3/4}.$$
\end{thm}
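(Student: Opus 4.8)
The plan is to recast both the opacity hypothesis and the left-hand side of the inequality in terms of a single nonnegative function on $\mathbb{T}$. Write $k(\theta) := |\sin\theta|$ and $\rho := \mu_{\mathcal{O}} - \mu_{\partial\Omega}$. Unwinding the definition of the associated measures, convolution on $\mathbb{T}$ gives that $k * \mu_{\mathcal{O}}$ is the function $\theta \mapsto \sum_i |\ell_i|\,|\sin(\alpha_i - \theta)|$, the total length of the orthogonal projections of the segments of $\mathcal{O}$ onto a line of direction $\theta$, while $k * \mu_{\partial\Omega}$ is the width function $W(\theta)$ of $\Omega$, i.e. the length of the shadow of $\Omega$ under parallel rays of direction $\theta$ — this is exactly what the factor-of-$2$ rescaling in the definition of $\mu_{\partial\Omega}$ achieves, since $\partial\Omega$ projects two-to-one onto the shadow. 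Opacity now becomes a pointwise inequality: the lines of direction $\theta$ meeting $\Omega$ form an interval of positions of length $W(\theta)$, each must meet $\mathcal{O}$, and the positions of lines meeting $\mathcal{O}$ fill a set of $\mathcal{H}^1$-measure at most $\sum_i |\ell_i||\sin(\alpha_i-\theta)|$ by subadditivity under the projection. Hence $F := k * \rho = k*\mu_{\mathcal{O}} - W \geq 0$ on all of $\mathbb{T}$, and integrating over $\mathbb{T}$ — with $\int_{\mathbb{T}} k = 4$ — simultaneously reproduces Jones' inequality and identifies the deficit:
$$ \|F\|_{L^1(\mathbb{T})} = \int_{\mathbb{T}} F = 4\Bigl(L - \tfrac{|\partial\Omega|}{2}\Bigr) =: 4\delta . $$

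The engine of the argument is the elementary distributional identity $k'' + k = 2(\delta_0 + \delta_\pi)$ on $\mathbb{T}$: away from its two corners $|\sin\theta|$ satisfies $k'' = -k$, and each corner contributes a $2\delta$. Convolving with $\rho$, and using that $\rho$ is invariant under the half-turn $\theta \mapsto \theta+\pi$ (each segment of $\mathcal{O}$, and each boundary element, is weighted equally in the two antipodal directions, so $(\delta_0+\delta_\pi)*\rho = 2\rho$), one obtains the clean formula
$$ \mu_{\mathcal{O}} - \mu_{\partial\Omega} = \rho = \tfrac14\bigl(F + F''\bigr) . $$
Since $(F+F'')^{\wedge}(n) = (1-n^2)\widehat F(n)$ and $(n^2-1)^2 \leq n^4$ for every integer $n \neq 0$, the homogeneous Sobolev weight $|n|^{-4}$ exactly absorbs the deconvolution:
$$ \bigl\| \mu_{\mathcal{O}} - \mu_{\partial\Omega} \bigr\|_{\dot H^{-2}(\mathbb{T})} = \tfrac14\bigl\| F + F'' \bigr\|_{\dot H^{-2}(\mathbb{T})} \leq \tfrac14\| F \|_{\dot H^0(\mathbb{T})} \leq \tfrac14\|F\|_{L^2(\mathbb{T})} . $$

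It remains to estimate $\|F\|_{L^2}$, and here the smallness of the deficit enters. Because $F \geq 0$,
$$ \|F\|_{L^2(\mathbb{T})}^2 \leq \|F\|_{L^\infty(\mathbb{T})}\,\|F\|_{L^1(\mathbb{T})} = 4\delta\,\|F\|_{L^\infty(\mathbb{T})} , $$
so it suffices to prove $\|F\|_{L^\infty}^2 \lesssim L\delta$. This follows from the regularity of $F = k*\rho$: as $k$ is $1$-Lipschitz, $F$ is Lipschitz with constant controlled by $L$ (via Jones, the total masses in play are $\leq L$), and a nonnegative $M$-Lipschitz function on the circle with $L^1$-norm $I$ has supremum at most $\sqrt{MI}$, as one sees by comparing it near its maximum with the ``tent'' of the same slope. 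Feeding $\|F\|_{L^\infty}^2 \lesssim L\cdot 4\delta$ into the displays above and chasing the constants (with Lipschitz constant $L$ one gets $\|F\|_{L^2(\mathbb{T})} \le 4^{3/4}L^{1/4}\delta^{3/4}$ and then the factor $4^{3/4}/4 = 2^{-1/2}$) yields the asserted bound.

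I expect the main obstacle to be this last $L^\infty$ estimate — specifically, squeezing the constant down to $1/\sqrt 2$. The width function $W = k*\mu_{\partial\Omega}$ carries its own Lipschitz constant (bounded by $\mathrm{diam}(\Omega)$, hence by $L$), which must be combined with that of $k*\mu_{\mathcal{O}}$ without inflating the numerical factor; and when $F$ is nearly constant the ``tent'' comparison degenerates and must be replaced by the cruder bound $\|F\|_{L^\infty} \lesssim \|F\|_{L^1}/|\mathbb{T}| + \pi\,\mathrm{Lip}(F)$, which, fortunately, still delivers $\|F\|_{L^\infty} \lesssim \sqrt{L\delta}$ in that regime. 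Once $\|F\|_{L^2(\mathbb{T})}^2 \lesssim L^{1/2}\delta^{3/2}$ is secured, the conclusion is immediate from the deconvolution inequality above.
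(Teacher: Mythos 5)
Your proposal is essentially the paper's proof in different clothing. The paper works with $g(\theta)=\int_0^{2\pi}|\cos(\theta-\alpha)|\,d\mu_{\mathcal O}(\alpha)$ and $f(\theta)=\int_0^{2\pi}|\cos(\theta-\alpha)|\,d\mu_{\partial\Omega}(\alpha)$ (your $k*\mu_{\mathcal O}$ and $W$, up to a rotation by $\pi/2$), uses opacity to get the pointwise inequality $g\ge f$, integrates it to identify $\int(g-f)=4\bigl(L-\tfrac{|\partial\Omega|}{2}\bigr)$, proves the sup bound by the same Lipschitz/tent comparison, multiplies to get the $L^2$ bound, and then passes to $\dot H^{-2}$ by computing the Fourier coefficients $a_\ell$ of $|\cos\theta|$ explicitly and using $|a_\ell|\ge 4/\ell^2$ on the even frequencies together with the half-turn invariance of the measures; your distributional identity $k''+k=2(\delta_0+\delta_\pi)$, combined with $(n^2-1)^2\le n^4$, is a tidy repackaging of exactly that coefficient computation, not a different route. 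The one caveat is the point you yourself flag: the honest Lipschitz constant of $F=k*\mu_{\mathcal O}-k*\mu_{\partial\Omega}$ is $L+\tfrac{|\partial\Omega|}{2}\le 2L$, not $L$, and with the tent of slope $2L$ your chain yields the constant $2^{-1/4}$ rather than the stated $2^{-1/2}$; the paper's own Lemma 4 has the same wrinkle (it takes $h=g-f$ to be $2L$-Lipschitz yet records the tent integral as $M^2/L$, although a triangle of height $M$ and slope $2L$ has area $M^2/(2L)$), so your accounting reproduces what the argument actually delivers, and the discrepancy affects only the numerical prefactor, not the structure or the exponents.
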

\textit{Remarks}.
\begin{enumerate}
    \item For any measure $\nu$ the homogeneous Sobolev space $\dot H^{-2}(\mathbb{T})$ has a number of equivalent definitions; we work with one based on the Fourier transform
    $$ \| \nu \|_{\dot H^{-2}(\mathbb{T})}^2 := \sum_{\ell \in \mathbb{Z} \atop \ell \neq 0} \frac{ \left| \widehat{\nu}(\ell) \right|^2}{\ell^4}.$$
\item The inequality is true for all opaque sets; it is particularly informative when the Jones inequality is nearly satisfied, that is, when $L - |\partial \Omega|/2$ is small.
    \item Duality makes the result easier to interpret: for any smooth $2\pi-$periodic function $\phi:[0,2\pi] \rightarrow \mathbb{R}$ with mean value 0
    $$ \left| \int_0^{2\pi} \phi(\theta) d\mu_{\mathcal{O}}(\theta) - \int_0^{2\pi} \phi(\theta) d\mu_{\partial \Omega}(\theta) \right| \leq \|\phi\|_{\dot H^2} \cdot \left\| \mu_{\mathcal{O}} - \mu_{\partial \Omega} \right\|_{\dot H^{-2}}.$$
    By taking suitable test functions $\phi$, we see that proximity of the measures in $\dot H^{-2}(\mathbb{T})$ implies that they assign similar values to smooth functions. 
    \item For explicit special cases, easier proofs of variations of this result expressing the same principle may exist. We give an example for $[0,1]^2$ in \S 1.3.
\end{enumerate}

\subsection{A special case.}
 Returning to the unit square $[0,1]^2$, the main result shows that if $\mathcal{O}$ is a (hypothetical) opaque set with length is close to 2, then $\mathcal{O}$ has to be mostly comprised of line segments that are either very nearly parallel to the $x-$axis or very nearly parallel to the $y-$axis. There is a direct proof of such a result.

\begin{proposition}
Let $\Omega = [0,1]^2$ and let $\mathcal{O}$ be an opaque set. Then, for all $\eta > 0$,
\begin{enumerate}
    \item either its length is large
    $$ L \geq 2 + \eta \qquad$$
    \item or, for any $0 \leq \beta \leq \pi/4$, if $J_{\beta} \subset [0,2\pi)$ denotes the angles that are $\geq \beta$ from both sides of the $x-$axis and $y-$axis, then
    $$ \mu_{\mathcal{O}}(J_{\beta}) \leq  \frac{\eta}{1 - \cos{(\beta)}}.$$
 \end{enumerate}
\end{proposition}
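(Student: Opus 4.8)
The plan is to run Jones' projection argument, but only in the two diagonal directions, which already suffices for the square, and then keep track of the slack. For a unit vector $e_\phi=(\cos\phi,\sin\phi)$, opacity forces the orthogonal projection of $\mathcal{O}$ onto the line $\mathbb{R}e_\phi$ to cover the orthogonal projection of $\Omega$ onto that line: indeed, a line with direction $e_{\phi+\pi/2}$ meets $\Omega$ iff its (single) projection point lies in $\mathrm{proj}_{e_\phi}(\Omega)$, and it meets $\mathcal{O}$ iff that point lies in $\mathrm{proj}_{e_\phi}(\mathcal{O})$. Since $\mathcal{O}=\bigcup_i \ell_i$ is a countable union of segments and a segment $\ell_i$ at angle $\alpha_i$ projects onto $\mathbb{R}e_\phi$ to an interval of length $|\ell_i|\,|\cos(\alpha_i-\phi)|$, countable subadditivity of Lebesgue measure yields
$$ \sum_i |\ell_i|\,|\cos(\alpha_i-\phi)| \ \geq\ w_\Omega(\phi) \qquad\text{for every }\phi, $$
where $w_\Omega(\phi)$ is the width of $\Omega$ along $e_\phi$. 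For $\Omega=[0,1]^2$ one has $w_\Omega(\phi)=|\cos\phi|+|\sin\phi|$, so in particular $w_\Omega(\pi/4)=w_\Omega(3\pi/4)=\sqrt2$.

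Next I would add the two inequalities for $\phi=\pi/4$ and $\phi=3\pi/4$ and simplify with the elementary identity $|a+b|+|a-b|=2\max(|a|,|b|)$ applied to $a=\cos\alpha$, $b=\sin\alpha$: since $\cos(\alpha-\pi/4)=\tfrac1{\sqrt2}(\cos\alpha+\sin\alpha)$ and $\cos(\alpha-3\pi/4)=\tfrac1{\sqrt2}(\sin\alpha-\cos\alpha)$, we get $|\cos(\alpha-\pi/4)|+|\cos(\alpha-3\pi/4)|=\sqrt2\,\max(|\cos\alpha|,|\sin\alpha|)$. Dividing by $\sqrt2$ gives
$$ \sum_i |\ell_i|\,\max\bigl(|\cos\alpha_i|,|\sin\alpha_i|\bigr) \ \geq\ 2. $$
Because $\max(|\cos\alpha|,|\sin\alpha|)\le 1$ and $\sum_i|\ell_i|=L$, this re-proves $L\ge 2$; and if alternative (1) fails, i.e. $L<2+\eta$, subtracting yields the crucial slack bound $\sum_i |\ell_i|\bigl(1-\max(|\cos\alpha_i|,|\sin\alpha_i|)\bigr)\le L-2<\eta$. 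Finally I would use that $\max(|\cos\alpha|,|\sin\alpha|)=\cos d(\alpha)$, where $d(\alpha)\in[0,\pi/4]$ is the angular distance from $\alpha$ to $\{0,\pi/2,\pi,3\pi/2\}$, so that for $\alpha_i\in J_\beta$ (where $d(\alpha_i)\ge\beta$ and $\beta\le\pi/4$) the summand is at least $1-\cos\beta\ge 0$. Discarding the other (nonnegative) terms and using that $J_\beta$ is invariant under $\theta\mapsto\theta+\pi$, so that $\mu_{\mathcal{O}}(J_\beta)=\sum_{\alpha_i\in J_\beta}|\ell_i|$, gives $(1-\cos\beta)\,\mu_{\mathcal{O}}(J_\beta)<\eta$, which is alternative (2).

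The argument is short and I do not expect a serious obstacle; the points that need care are purely book-keeping. One is justifying that opacity genuinely forces containment of the two projections, which is the line-meets-$\Omega$-versus-line-meets-$\mathcal{O}$ dichotomy above, together with the reduction of $\mathcal{O}$ to a countable union of segments — exactly the standing assumption recorded in \S1.2. The identities $\max(|\cos\alpha|,|\sin\alpha|)=\cos d(\alpha)$ and the monotonicity $1-\cos d(\alpha)\ge 1-\cos\beta$ when $d(\alpha)\ge\beta$ are immediate on $[0,\pi/4]$. The one thing worth double-checking is that the two diagonal directions really suffice here, in place of the integral over all directions used in the general Jones proof (which invokes Cauchy's perimeter formula); they do, precisely because the width of the square attains its maximum $\sqrt2$ at $\phi=\pm\pi/4$ while $\max(|\cos\alpha|,|\sin\alpha|)$ attains its maximum $1$ at the axis directions, so no loss is incurred.
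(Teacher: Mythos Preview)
Your proof is correct and follows essentially the same route as the paper: both project onto the two diagonal directions $\pi/4$ and $3\pi/4$, sum the resulting inequalities, and exploit the identity $|\cos(\alpha-\pi/4)|+|\cos(\alpha-3\pi/4)|=\sqrt{2}\,\max(|\cos\alpha|,|\sin\alpha|)$ (which the paper records equivalently as the ``symmetry'' $|\cos(\pi/4-\alpha)|+|\cos(3\pi/4-\alpha)|=f(\alpha+\pi/4)$) to extract the slack when $L<2+\eta$. Your packaging via $|a+b|+|a-b|=2\max(|a|,|b|)$ and $\max(|\cos\alpha|,|\sin\alpha|)=\cos d(\alpha)$ is, if anything, slightly cleaner than the paper's version.
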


This imposes some additional structure on hypothetical opaque sets that are fully contained in $[0,1]^2$ with a length close to 2. It is conjectured (see Fig. 1) that the shortest opaque set is fully contained in $[0,1]^2$. Using the Proposition we could consider lines with an angle of $\pm 45^{\circ}$ sweeping out the four disjoint corners of the unit square, see Fig. 4 (left). For each corner, one sweeps out a length of $1/\sqrt{8}$. Each of the lines has to hit the opaque set. If the opaque set has length close to 2, then most of it is either nearly parallel to the $x-$axis or nearly parallel to the $y-$axis and the length of the opaque set in each corner has to be nearly $\sqrt{2}/ \sqrt{8} = 1/2$. The four corners by themselves already have to contain very nearly length $2$. 

\begin{center}
    \begin{figure}[h!]
        \begin{tikzpicture}[scale=1.2]
        \draw (-5, 0) -- (-2,0) -- (-2,3) -- (-5, 3) -- (-5, 0);
        \draw [<->] (-2.1, 2.9) -- (-2.7, 2.3);
        \node at (-2.7, 2.8) {\small $8^{-1/2}$};
        \draw [dashed] (-5.5, 1.1) -- (-3, 3.6);
           \draw [dashed] (-5.5, 1.2) -- (-3.1, 3.6); 
                  \draw [dashed] (-5.5, 1.3) -- (-3.2, 3.6); 
        \draw [dashed] (-5.5, 2) -- (-3, -0.5);
          \draw [dashed] (-5.5, 1) -- (-3, 3.5);
           \draw [dashed] (-1.5, 2) -- (-4, -0.5);
         \draw [dashed] (-1.5, 1) -- (-4, 3.5);
        \end{tikzpicture}
        \caption{Four corners with the top left being `swept out'.}
        \label{fig:enter-label}
    \end{figure}
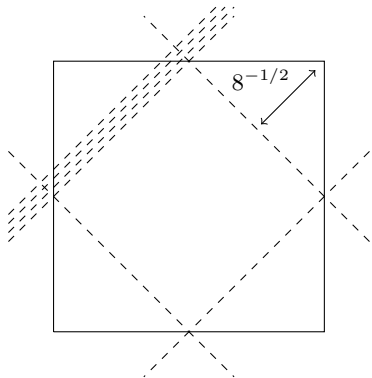
\end{center}

This appears to be a helpful additional piece of information, it severely restricts what a short opaque set can do. Our result generalizes this argument whenever we are close to the Jones bound. A similar idea also appeared in work of Izumi \cite{iz} for the equilateral triangle: Izumi arrived at a similar conclusion by means of a linear program and then used this to break the Jones barrier. One consequence of our main result is that this type of argument is now possible for any shape. In particular, it suggests that opaque sets for the unit disk with length close to $\pi$ would have to be very nearly `uniformly distributed in direction'.

\subsection{Outlook} Most of the existing arguments make heavy use of the angular distribution of the opaque set while putting less emphasis on their location in physical space. Presumably one should think of opaque sets as one-dimensional sets in $\mathbb{R}^2 \times \mathbb{S}^1$ but it is maybe less clear how this could be implemented. There is another way to think about the Jones bound. Let $X$ be a random variable defined as follows, we pick a `randomly' chosen line (`random' with respect to the kinematic measure placed on lines intersecting $\Omega$ but this is a technical detail) and count the number of intersections that the line has with $\mathcal{O}$. Crofton's formula tells us that $\mathbb{E}X$ is directional proportional to the length of $\mathcal{O}$ and $X \geq 1$ implies $\mathbb{E}X \geq 1$ which, after some computation, implies the Jones bound. It is clear that one should have $\mathbb{E}X \geq 1 +\varepsilon(\Omega)$ but it is less clear how to prove this. It was recently shown \cite{stein} that $\mathbb{E} X^2$ can be expressed in terms of energy functionals of the form
$$ E(\mathcal{O}) = \int_{\mathcal{O}} \int_{\mathcal{O}}\frac{\left|\left\langle n(x), y - x \right\rangle  \left\langle  y - x, n(y) \right\rangle   \right| }{\|x - y\|^{3}}~d \mathcal{H}^1(x) d\mathcal{H}^1(y),$$
where $n(x)$ and $n(y)$ are the normal vectors in $x,y \in \mathcal{O}$. Combined with our results above, it seems that this could be used to prove bounds along the lines of  $\mathbb{E}X^2 \geq 1 +\varepsilon_2(\Omega)$. However, these by themselves do not imply $\mathbb{E}X \geq 1 +\varepsilon(\Omega)$.

\section{Proofs}
\subsection{Jones' inequality.} We start by presenting the standard proof of Jones' inequality; along the way, we establish some of the notation that is used later. Let $\Omega \subset \mathbb{R}^2$ be a convex set and let $f(\theta)$ be the length of its projection onto the $x-$axis after having been rotated by angle $\theta$. The main inequality is
\begin{align} \label{ineq:proj}
\forall ~0\leq \theta \leq 2\pi \qquad \quad \int_0^{2\pi} |\cos{(\theta-\alpha)}| d\mu_{\mathcal{O}}(\alpha) \geq f(\theta).
 \end{align}
 It follows from assuming that all line segments cast a different shadow towards direction $\theta$, no overlap, and then compute the length of all these shadows: if it were shorter than the shadow cast by $\Omega$, then there would be a line intersecting $\Omega$ but not intersecting $\mathcal{O}$.
  Cauchy's surface area formula \cite{cauchy} implies that the mean-width of $\Omega$, the average value of $f(\theta)$, is directly proportional to the length of its boundary  
$$ \int_0^{2\pi} f(\theta) = 2 \cdot |\partial \Omega|.$$
Integrating over $\theta$ implies the Jones bound
\begin{align*}
    4 \cdot L &= \left( \int_0^{2\pi} |\cos{(\theta)}|d\theta\right) \left( \int_0^{2\pi} 1~ d\mu_{\mathcal{O}}(\alpha)\right) \\
    &=   \int_0^{2\pi} \left[ \int_0^{2\pi} |\cos{(\theta-\alpha)}| d\mu_{\mathcal{O}}(\alpha)\right]  d \theta \geq \int_0^{2\pi} f(\theta) d\theta = 2 \cdot |\partial \Omega|.
\end{align*}

\subsection{Refining the argument.} We introduce the abbreviation
$$ g(\theta) =  \int_0^{2\pi} |\cos{(\theta-\alpha)}| d\mu_{\mathcal{O}}(\alpha).$$
$g(\theta)$ only depends on the opaque set $\mathcal{O}$, $f(\theta)$ only depends on $\Omega$.
The main inequality is $g(\theta) \geq f(\theta)$ and it is an easy observation that this is either close to being attained for most $\theta$ or there is some room for improvement.
\begin{lemma}
    The length of the opaque set satisfies
    $$ L \geq \frac{|\partial \Omega|}{2} + \frac{1}{4}  \int_{0}^{2\pi}(g(\theta)-f(\theta)) d \theta.$$
\end{lemma}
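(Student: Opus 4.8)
The plan is to follow the structure of the Jones argument already presented, but keep the slack term instead of discarding it. Recall the two identities established just above: Cauchy's surface area formula gives $\int_0^{2\pi} f(\theta)\, d\theta = 2|\partial\Omega|$, and the separation-of-variables computation gives $\int_0^{2\pi} g(\theta)\, d\theta = \bigl(\int_0^{2\pi} |\cos\theta|\, d\theta\bigr)\bigl(\int_0^{2\pi} 1\, d\mu_{\mathcal{O}}\bigr) = 4L$, using that $\int_0^{2\pi} |\cos\theta|\, d\theta = 4$ and $\mu_{\mathcal{O}}([0,2\pi)) = \mathcal{H}^1(\mathcal{O}) = L$.

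From here the proof is essentially a single line of bookkeeping. First I would write $\int_0^{2\pi}(g(\theta) - f(\theta))\, d\theta = 4L - 2|\partial\Omega|$, simply by subtracting the two identities above. Then I would solve for $L$: dividing by $4$ gives
\begin{equation*}
L = \frac{|\partial\Omega|}{2} + \frac{1}{4}\int_0^{2\pi}(g(\theta) - f(\theta))\, d\theta.
\end{equation*}
Since the main inequality \eqref{ineq:proj} says $g(\theta) \geq f(\theta)$ pointwise, the integrand is nonnegative, so in fact equality holds and in particular the stated inequality $L \geq \frac{|\partial\Omega|}{2} + \frac{1}{4}\int_0^{2\pi}(g(\theta)-f(\theta))\, d\theta$ holds (with equality). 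It is worth noting that the lemma as stated is an inequality only because later applications will replace $g - f$ by a smaller nonnegative quantity; the identity itself is exact.

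There is no real obstacle here — the only thing to be careful about is that the computation of $\int_0^{2\pi} g(\theta)\, d\theta$ in the Jones argument secretly used Fubini/Tonelli (the integrand $|\cos(\theta-\alpha)|$ is nonnegative and bounded, and $\mu_{\mathcal{O}}$ is a finite measure, so this is unproblematic) together with the translation-invariance of $\int_0^{2\pi} |\cos(\theta-\alpha)|\, d\theta = \int_0^{2\pi}|\cos\theta|\, d\theta = 4$, which holds for every fixed $\alpha$. Both are already implicitly justified in the displayed chain of equalities proving the Jones bound, so I would just cite that computation. The main conceptual point — and the reason this lemma is the right refinement to state — is that Jones' inequality loses exactly the quantity $\frac14\int(g-f)$, and any lower bound on that integral directly upgrades the Jones bound; the subsequent sections will estimate it from below.
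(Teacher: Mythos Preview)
Your argument is correct and essentially identical to the paper's: both compute $\int_0^{2\pi} g(\theta)\,d\theta = 4L$ and $\int_0^{2\pi} f(\theta)\,d\theta = 2|\partial\Omega|$, subtract, and rearrange, yielding in fact equality in the stated lemma. Your added remarks about Fubini and about the statement really being an identity are valid but inessential elaborations on the same one-line proof.
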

\begin{proof}
    This follows from the previous argument since
    \begin{align*}
        4 \cdot L &= \int_{0}^{2\pi} g(\theta) d \theta = \int_{0}^{2\pi} f(\theta) + (g(\theta)-f(\theta)) d \theta\\
        &= 2 \cdot |\partial \Omega| +  \int_{0}^{2\pi}(g(\theta)-f(\theta)) d \theta.
    \end{align*}
\end{proof}

This shows that $g(\theta) - f(\theta)$ being large in $L^1$ leads to an improved lower bound for $L$. We prove an analogous result for the $L^2-$norm (Lemma 5). This requires a reverse H\"older inequality which we achieve by combining control of the Lipschitz constant (Lemma 3) with a bound on the maximum (Lemma 4). We also note an integral representation for the projection width $f(\theta)$ using $\mu_{\partial \Omega}$.

\begin{lemma} We have
     $$ f(\theta) =  \int_0^{2\pi} |\cos{(\theta-\alpha)}| d\mu_{\partial \Omega}(\alpha).$$
\end{lemma}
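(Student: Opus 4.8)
\emph{Proof strategy.} The plan is to unfold the definition of $\mu_{\partial\Omega}$ and reduce the claimed identity to the classical fact that the boundary of a convex body casts a shadow of length exactly twice its width. Write $v_\theta=(\cos\theta,\sin\theta)$, and recall that a segment $\ell_i$ of length $|\ell_i|$ at angle $\alpha_i$ contributes $\tfrac{|\ell_i|}{2}\delta_{\alpha_i}+\tfrac{|\ell_i|}{2}\delta_{\alpha_i+\pi}$ to the construction, and that for $\partial\Omega$ the whole thing is additionally rescaled by $1/2$. Since $|\cos|$ is $\pi$-periodic, for a piecewise linear boundary with edges $\ell_i$ at angles $\alpha_i$ we get
$$ \int_0^{2\pi} |\cos(\theta-\alpha)|\,d\mu_{\partial\Omega}(\alpha)=\frac12\sum_i |\ell_i|\,|\cos(\theta-\alpha_i)|. $$
As $|\ell_i|\,|\cos(\theta-\alpha_i)|$ is exactly the length of the orthogonal projection of $\ell_i$ onto the line spanned by $v_\theta$, it suffices to prove $\sum_i |\ell_i|\,|\cos(\theta-\alpha_i)|=2f(\theta)$, where $f(\theta)$ is the width of $\Omega$ in the direction $v_\theta$ (which is what the "length of the projection onto the $x$-axis after rotating by $\theta$" computes, up to the harmless choice of sign of $\theta$, immaterial everywhere in the paper).

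For polygons this is elementary. Fix $\theta$ and let $p,q\in\partial\Omega$ be the two vertices extremal in the direction $v_\theta$; they split $\partial\Omega$ into two polygonal chains $C_1,C_2$. By convexity the coordinate $x\mapsto\langle x,v_\theta\rangle$ is monotone along each chain, so the orthogonal projection onto the $v_\theta$-axis carries each $C_j$ bijectively (up to its two endpoints) onto the shadow of $\Omega$, an interval of length $f(\theta)$. Summing the projected lengths of the edges along $C_1$ therefore yields $f(\theta)$, and likewise for $C_2$; adding the two gives $\sum_i |\ell_i|\,|\cos(\theta-\alpha_i)|=2f(\theta)$, which combined with the display above proves Lemma 2 for piecewise linear boundaries.

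For a general convex $\Omega$ there are two equivalent routes. The first is approximation: take inscribed convex polygons $\Omega_n\to\Omega$ in the Hausdorff metric; then the support functions, hence the widths, converge uniformly, $f_n\to f$, and the associated measures $\mu_{\partial\Omega_n}$ converge weakly to $\mu_{\partial\Omega}$ — this is precisely the limiting procedure by which $\mu_{\partial\Omega}$ was defined. Since $\alpha\mapsto|\cos(\theta-\alpha)|$ is bounded and continuous, $\int|\cos(\theta-\alpha)|\,d\mu_{\partial\Omega_n}\to\int|\cos(\theta-\alpha)|\,d\mu_{\partial\Omega}$, and the polygonal identity passes to the limit. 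The second route is direct: parametrize $\partial\Omega$ by arc length $\gamma:[0,|\partial\Omega|]\to\mathbb{R}^2$, which is $1$-Lipschitz, hence absolutely continuous, with $\gamma'(s)=(\cos\alpha(s),\sin\alpha(s))$ for the a.e.-defined tangent angle $\alpha(s)$ (Rademacher, as noted in the excerpt). The same unfolding of $\mu_{\partial\Omega}$ gives
$$ \int_0^{2\pi}|\cos(\theta-\alpha)|\,d\mu_{\partial\Omega}(\alpha)=\frac12\int_0^{|\partial\Omega|}\bigl|\langle\gamma'(s),v_\theta\rangle\bigr|\,ds=\frac12\int_0^{|\partial\Omega|}\Bigl|\tfrac{d}{ds}\langle\gamma(s),v_\theta\rangle\Bigr|\,ds, $$
and the last integral is the total variation of $s\mapsto\langle\gamma(s),v_\theta\rangle$; by convexity this function is monotone on each of the two arcs into which $\partial\Omega$ is divided by its extremal points in direction $v_\theta$, so its total variation equals $2(\max-\min)=2f(\theta)$.

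The argument is bookkeeping once the conventions are pinned down; the only genuine point is the "shadow covered with multiplicity exactly two" fact (equivalently, the monotonicity of $\langle\cdot,v_\theta\rangle$ along each of the two boundary arcs), which itself follows from the observation that a line meets a convex set in a segment, so each level set $\{\langle\cdot,v_\theta\rangle=c\}$ meets $\partial\Omega$ in (generically) one point on each arc. I expect this, together with checking that it is exactly the rescaling by $2$ in the definition of $\mu_{\partial\Omega}$ that converts $\sum_i|\ell_i|\,|\cos(\theta-\alpha_i)|=2f(\theta)$ into the clean identity $\int|\cos(\theta-\alpha)|\,d\mu_{\partial\Omega}=f(\theta)$, to be the only place requiring care.
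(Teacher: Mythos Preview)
Your argument is correct and is essentially the same as the paper's: both identify the integral as the total projected length of the boundary segments, invoke convexity to see that the shadow is covered with multiplicity exactly two, and note that the factor $1/2$ in the definition of $\mu_{\partial\Omega}$ turns $2f(\theta)$ into $f(\theta)$. You simply supply the details (the monotonicity of $\langle\gamma(s),v_\theta\rangle$ along each arc, and the limiting/total-variation justification for non-polygonal $\Omega$) that the paper leaves implicit.
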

\begin{proof}
    Think of $\partial \Omega$ as a union of little line segments (or, more formally, approximate $\partial \Omega$ by these). The integral then computes the total length of the shadow in direction $\theta$ \textit{weighted by overlap}. However, $\Omega$ is convex, and thus the overlap is always a factor of 2, (Lebesgue-)almost every line passes through $\partial \Omega$ exactly twice. This is compensated by the factor $1/2$ in the definition of $\mu_{\partial \Omega}$.
\end{proof}

\begin{lemma}
$g(\theta)$ and   $f(\theta)$ are Lipschitz-continuous with Lipschitz constant $L$, the length of the opaque set.
\end{lemma}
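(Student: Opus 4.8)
The plan is to reduce everything to the elementary fact that $t \mapsto |\cos t|$ is Lipschitz with constant $1$, and then to integrate this pointwise estimate against the two measures, whose total masses are $L$ and $|\partial \Omega|/2 \leq L$, respectively.

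First I would record the pointwise inequality: for any $a,b \in \mathbb{R}$, the reverse triangle inequality together with a product-to-sum identity gives
$$ \bigl| |\cos a| - |\cos b| \bigr| \leq |\cos a - \cos b| = \left| 2\sin\!\left(\tfrac{a+b}{2}\right)\sin\!\left(\tfrac{a-b}{2}\right) \right| \leq 2\left|\sin\!\left(\tfrac{a-b}{2}\right)\right| \leq |a-b|.$$
(Equivalently, $|\cos t|$ is continuous and differentiable off a discrete set with derivative bounded by $1$ in modulus, which also yields $1$-Lipschitzness.) Next, using the integral representation from Lemma 2, namely $f(\theta) = \int_0^{2\pi} |\cos(\theta-\alpha)|\, d\mu_{\partial \Omega}(\alpha)$, I would apply this bound with $a = \theta_1 - \alpha$ and $b = \theta_2 - \alpha$ and integrate:
$$ |f(\theta_1) - f(\theta_2)| \leq \int_0^{2\pi} \bigl| |\cos(\theta_1-\alpha)| - |\cos(\theta_2-\alpha)| \bigr|\, d\mu_{\partial \Omega}(\alpha) \leq |\theta_1-\theta_2| \cdot \mu_{\partial \Omega}([0,2\pi)).$$
Since $\mu_{\partial \Omega}([0,2\pi)) = |\partial \Omega|/2 \leq L$ by Jones' Theorem, this gives the claimed Lipschitz bound for $f$. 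The identical computation with $\mu_{\mathcal{O}}$ in place of $\mu_{\partial \Omega}$ and $g$ in place of $f$, now using $\mu_{\mathcal{O}}([0,2\pi)) = \mathcal{H}^1(\mathcal{O}) = L$, gives the Lipschitz bound for $g$.

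I do not anticipate any real obstacle. The only point meriting a word of care is that $|\cos|$ is not everywhere differentiable, but this is irrelevant because Lipschitz continuity only requires the bound on differences supplied directly by the displayed inequality. (For $f$ one could alternatively argue purely geometrically, since the width of the projection of a fixed convex body onto a rotating line varies slowly, but the measure-theoretic argument above is cleaner and treats $f$ and $g$ uniformly.)
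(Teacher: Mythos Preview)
Your proof is correct and matches the paper's own argument essentially line for line: both use the integral representations of $g$ and $f$ (the latter via Lemma~2), the $1$-Lipschitzness of $|\cos|$, and the total masses $\mu_{\mathcal{O}}([0,2\pi))=L$ and $\mu_{\partial\Omega}([0,2\pi))=|\partial\Omega|/2\le L$ (the last inequality by Jones). The only cosmetic difference is that you spell out the inequality $\bigl||\cos a|-|\cos b|\bigr|\le|a-b|$ explicitly and treat $f$ before $g$, whereas the paper does $g$ first and simply asserts the $1$-Lipschitz property of $|\cos|$.
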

\begin{proof}
    The statement for $g(\theta)$ follows at once from the formula
    $$ g(\theta) =  \int_0^{2\pi} |\cos{(\theta-\alpha)}| d\mu_{\mathcal{O}}(\alpha),$$
 the fact that $\left|\cos{(\theta)}\right|$ is Lipschitz continuous with Lipschitz constant 1 and that the measure has total mass $\mu_{\mathcal{O}}([0,2\pi)) = L$. The second statement is similar: we use Lemma 2 and the same argument to deduce that $f$ is Lipschitz constinuous with Lipschitz constant $|\partial \Omega|/2$ which, by Jones' bound, is smaller than $L$,
\end{proof}

\begin{lemma} We have 
$$ \int_{0}^{2\pi} (g(\theta) - f(\theta))^2~ d\theta \leq  8 \cdot \sqrt{L} \cdot \left(L - \frac{|\partial \Omega|)}{2}\right)^{3/2}.$$
\end{lemma}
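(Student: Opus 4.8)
Write $h(\theta) := g(\theta) - f(\theta)$. By the main inequality~\eqref{ineq:proj} we have $h \geq 0$ on $[0,2\pi]$; by Lemma 3 the function $h$ is Lipschitz on $\mathbb{T}$ with constant at most $2L$; and the computation in the proof of Lemma 1 gives the $L^{1}$ identity $A := \int_{0}^{2\pi} h\, d\theta = 4\left(L - |\partial\Omega|/2\right)$. The plan is to upgrade this $L^{1}$ control to an $L^{2}$ bound by a reverse H\"older inequality for nonnegative Lipschitz functions, and then substitute.

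The analytic core is the claim: \emph{if $h \geq 0$ is $\Lambda$-Lipschitz on $\mathbb{T}$ and $\int_{\mathbb{T}} h \leq \pi^{2}\Lambda$, then $\int_{\mathbb{T}} h^{2} \leq \tfrac{2}{3}\, \Lambda^{1/2} (\int_{\mathbb{T}} h)^{3/2}$.} To prove it I would first record a ``bound on the maximum'': if $M = \|h\|_{L^{\infty}}$ is attained at $\theta_{0}$ then $h \geq (M - \Lambda|\theta-\theta_{0}|)^{+}$, hence $\int_{\mathbb{T}} h \geq M^{2}/\Lambda$, i.e.\ $M \leq (\Lambda \int_{\mathbb{T}} h)^{1/2} \leq \pi\Lambda$, so the triangular ``tent'' of height $M$ over a base of length $2M/\Lambda \leq 2\pi$ fits inside $\mathbb{T}$. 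I would then pass to the symmetric decreasing rearrangement $h^{*}$, which preserves both $\int_{\mathbb{T}} h$ and $\int_{\mathbb{T}} h^{2}$ and does not increase the Lipschitz constant: writing $h^{*}(\theta) = H(|\theta|)$ with $H$ nonincreasing on $[0,\pi]$ and $-\Lambda \leq H' \leq 0$, the functional $H \mapsto \int_{0}^{\pi} H^{2}$ is convex, hence for fixed mass it is maximised at an extreme point; the extreme points have $H' \in \{0,-\Lambda\}$ a.e.\ on $\{H>0\}$, and a short optimisation over this ``plateau-plus-ramp'' family shows that inserting any plateau only decreases $\int_{0}^{\pi} H^{2}$, so the maximiser is a pure triangle of slope $\pm\Lambda$ --- for which a one-line computation gives exactly $2/3$. (Alternatively one argues with $\mu(s):=|\{h>s\}|$, using $\int_{\mathbb{T}} h^{2} = \int_{0}^{\infty} 2s\,\mu\,ds$, $\int_{\mathbb{T}} h = \int_{0}^{\infty}\mu\,ds$, and that $\Lambda$-Lipschitz continuity forces $-\mu' \geq 2/\Lambda$ for $s$ strictly between $\min h$ and $\max h$, each such level being crossed at least twice around the circle.)

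Granting the claim, the Lemma follows with a short case analysis. If $A \leq \pi^{2}\Lambda$ the claim applies directly, and using $\Lambda \leq 2L$,
$$ \int_{0}^{2\pi} (g-f)^{2}\, d\theta \ \leq\ \frac{2}{3}(2L)^{1/2}\Bigl(4\bigl(L-\tfrac{|\partial\Omega|}{2}\bigr)\Bigr)^{3/2} \ =\ \frac{16\sqrt{2}}{3}\, L^{1/2}\Bigl(L-\frac{|\partial\Omega|}{2}\Bigr)^{3/2} \ \leq\ 8\, L^{1/2}\Bigl(L-\frac{|\partial\Omega|}{2}\Bigr)^{3/2},$$
since $16\sqrt{2}/3 = 7.54\ldots \leq 8$. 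In the remaining, degenerate case $A > \pi^{2}\Lambda$ (only possible when $\Lambda$ is very small) the extremal profile is instead a ``sawtooth'', the same rearrangement analysis gives $\int_{\mathbb{T}} h^{2} \leq \tfrac{A^{2}}{2\pi} + \tfrac{\pi^{3}\Lambda^{2}}{6} \leq \tfrac{2A^{2}}{3\pi}$ (using $\Lambda < A/\pi^{2}$), and since $0 \leq L - |\partial\Omega|/2 \leq L$ this is at most $\tfrac{32}{3\pi}\, L^{1/2}(L-|\partial\Omega|/2)^{3/2} \leq 8\, L^{1/2}(L-|\partial\Omega|/2)^{3/2}$.

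The step I expect to be the main obstacle is the \emph{sharp} constant $2/3$ in the reverse H\"older inequality. The crude chain $\int_{\mathbb{T}} h^{2} \leq \|h\|_{L^{\infty}}\int_{\mathbb{T}} h \leq \Lambda^{1/2}(\int_{\mathbb{T}} h)^{3/2}$ (a factor $1$ in place of $2/3$) pushed through the display above only yields the final constant $8\sqrt{2} > 8$, which is not enough. Recovering the $2/3$ forces one to genuinely exploit that a Lipschitz function cannot linger near its maximum on a large set --- the content of the rearrangement-to-a-triangle step, or of the bound $-\mu'\geq 2/\Lambda$ --- together with the observation that the extremal tent actually fits in $\mathbb{T}$; the remaining substitution only needs $\Lambda \leq 2L$ and $0 \leq L - |\partial\Omega|/2 \leq L$, both immediate from Lemma 3 and Jones' inequality.
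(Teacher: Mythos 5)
Your argument is correct in outline, but it takes a genuinely different route from the paper. The paper stays with the crude interpolation $\int h^2 \leq (\max h)\cdot\int h$ and instead bounds the maximum: from $h(\theta)\geq \max\{M-2L|\theta-\theta^*|,0\}$ it asserts $\int_0^{2\pi}h \geq M^2/L$, feeds this into Lemma 1 to get $M\leq\sqrt{4L^2-2L|\partial\Omega|}$, and multiplies out; no rearrangement and no sharp constant are needed. You instead prove the sharp reverse H\"older inequality $\int_{\mathbb{T}}h^2\leq \tfrac{2}{3}\Lambda^{1/2}(\int_{\mathbb{T}}h)^{3/2}$ for nonnegative $\Lambda$-Lipschitz $h$ with $\int h\leq\pi^2\Lambda$ (extremal tent) and substitute $\Lambda=2L$, $\int h=4(L-|\partial\Omega|/2)$, landing at $\tfrac{16\sqrt{2}}{3}<8$. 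Three remarks. First, your degenerate case is vacuous here: $\int h\leq 4L<2\pi^2L=\pi^2\Lambda$ always, so the sawtooth discussion can be dropped. Second, the decisive detail in your tent-extremality claim is exactly the one you flag in the parenthetical: the circle gives $-\mu'\geq 2/\Lambda$ (each intermediate level is crossed at least twice), equivalently the symmetric decreasing rearrangement of a $\Lambda$-Lipschitz function on $\mathbb{T}$ is again $\Lambda$-Lipschitz; the ``short optimisation'' over plateau-plus-ramp profiles genuinely needs the resulting monotonicity of the excess over the tent (the naive bound of the excess contribution by $2M\times\mathrm{mass}$ is not sufficient), but with it the claim is true, so your sketch is sound though the bang-bang step deserves to be written out. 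Third, and this is what your extra precision buys: direct integration of the tent $\max\{M-2L|\theta-\theta^*|,0\}$ gives $M^2/(2L)$, not $M^2/L$, and with that value the paper's chain produces the constant $8\sqrt{2}$ rather than $8$ --- precisely the ``crude chain'' constant you computed and rejected. So, unless there is a further argument behind the $M^2/L$ step, your sharp constant $2/3$ is not merely an alternative: it is what actually delivers the stated constant $8$ (and in fact slightly better, $16\sqrt{2}/3$).
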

\begin{proof}
    The function $h(\theta)= g(\theta) - f(\theta)$ is the difference of two $L-$Lipschitz functions and thus at most $2L-$Lipschitz. If $h$ assumes its maximum in $h(\theta^*) = M$, then
    $$ h(\theta) \geq \max\left\{ M - 2L |\theta - \theta^*|, 0 \right\}.$$
    Integrating over $\theta$ gives
    $$ \int_0^{2\pi} (g(\theta) - f(\theta)) ~d\theta \geq \frac{M^2}{L}.$$
    Combining this with Lemma 1
    \begin{align*}
          L \geq \frac{|\partial \Omega|}{2} + \frac{1}{4}  \int_{0}^{2\pi}(g(\theta)-f(\theta)) d \theta \geq \frac{|\partial \Omega|}{2} + \frac{M^2}{4L}.
    \end{align*}
This can also be rephrased as
$$ M =  \max_{0\leq \theta \leq 2\pi} (g(\theta) - f(\theta)) \leq \sqrt{4 L^2 - 2 L \cdot |\partial \Omega|}.$$
At this point, we argue
\begin{align*}
     \int_{0}^{2\pi} (g(\theta) - f(\theta))^2 d\theta &\leq \left(\max_{0\leq \theta \leq 2\pi} (g(\theta) - f(\theta)) \right) \int_{0}^{2\pi} (g(\theta) - f(\theta)) d\theta\\
     &\leq  \sqrt{4 L^2 - 2 L \cdot |\partial \Omega|}  \int_{0}^{2\pi} (g(\theta) - f(\theta)) d\theta\\
     &= \sqrt{4 L^2 - 2 L \cdot |\partial \Omega|} \cdot (4L - 2 |\partial \Omega|) \\
     &= 8 \cdot \sqrt{L} \cdot \left(L - \frac{|\partial \Omega|)}{2}\right)^{3/2}.
\end{align*}
\end{proof}

\subsection{Proof of the Theorem}
\begin{proof} The idea is relatively simple: write $f(\theta)$ and $g(\theta)$ as convolutions of $\left|\cos{(\theta)}\right|$ with $\mu_{\partial \Omega}$ and $\mu_{\mathcal{O}}$, respectively, take a Fourier transform and show that $\|f-g\|_{L^2(\mathbb{T})}$ has an alternative representation which, up to a constant, is comparable to the Sobolev norm. The non-degeneracy of the Fourier coefficients of $\left|\cos{(\theta)}\right|$, that they admit a bound from below for even frequencies, plays a crucial rule. The functions $1/(2\pi)$ and $e^{i \ell x}/\sqrt{2 \pi}$ for $\ell \neq 0$ form an orthonormal basis, we expand the measure $\mu_{\mathcal{O}}$ into Fourier series
    $$    \mu_{\mathcal{O}} = \frac{L}{2\pi} + \sum_{\ell \neq 0}  \left( \int_0^{2\pi} \frac{e^{-i \ell \theta}}{\sqrt{2\pi}} d\mu_{\mathcal{O}}(\theta)  \right) \frac{e^{i \ell \theta}}{\sqrt{2\pi}}$$
    and abbreviate this by defining the Fourier coefficients as
    $$    \mu_{\mathcal{O}} = \frac{L}{2\pi} +     \frac{1}{2\pi} \sum_{\ell \neq 0}  \widehat{\mu_{\mathcal{O}}}(\ell) e^{i \ell \theta}  \qquad \mbox{where} \qquad  \widehat{\mu_{\mathcal{O}}}(\ell) =\int_0^{2\pi} e^{-i \ell \theta} d\mu_{\mathcal{O}}(\theta).$$
Note that the measure $\mu_{\mathcal{O}}$ is singular and these equations are only valid in the weak sense and not in $L^p(\mathbb{T})$. Since $\mu_{\mathcal{O}}$ is, by construction, invariant under the shift $\theta \rightarrow \theta + \pi$, the Fourier coefficients associated to all odd frequencies $\ell $ vanish and only even frequencies $\ell \in 2 \mathbb{Z}$ remain. The same is true for $\mu_{\partial \Omega}$ which admits the same type of expansion. The function $\left|\cos{\theta}\right|$ can be written as
       $$ |\cos{(\theta)}| = \frac{4}{2\pi} +  \frac{1}{2\pi}\sum_{\ell \in \mathbb{Z} \atop \ell \neq 0}^{} a_{\ell} e^{i \ell \theta}$$
       where a short computation shows that, for $\ell \neq 0$,
       $$ a_{\ell} = \frac{4}{\ell^2 -1} \cdot \begin{cases}
           0 \qquad &\mbox{if}~\ell~\mbox{is odd} \\
         -1  &\mbox{if}~\ell \equiv 0 ~(\mbox{mod} ~4) \\
      1  &\mbox{if}~\ell \equiv 2 ~(\mbox{mod} ~4).
       \end{cases}$$
This allows us to compute the Fourier series of $g(\theta)$ explicitly as
\begin{align*}
 g(\theta) &= \int_{0}^{2\pi} |\cos{(\theta - \alpha)}| d\mu_{\mathcal{O}}(\alpha) \\
 &= \int_{0}^{2\pi} \left(  \frac{4}{2\pi} +  \frac{1}{2\pi}\sum_{\ell \in \mathbb{Z} \atop \ell \neq 0}^{} a_{\ell} e^{i \ell (\theta-\alpha)} \right) \left(  \frac{L}{2\pi} +     \frac{1}{2\pi} \sum_{m \neq 0}  \widehat{\mu_{\mathcal{O}}}(m) e^{i m \alpha}  \right)d\alpha \\
 &= \frac{4L}{2\pi} + \sum_{\ell \in \mathbb{Z} \atop \ell \neq 0}  \int_{0}^{2\pi}  \frac{a_{\ell}}{2\pi}  e^{i \ell (\theta-\alpha)} \frac{\widehat{\mu_{\mathcal{O}}}(\ell)}{2\pi}  e^{i \ell \alpha} d\alpha =  \frac{4L}{2\pi} +  \sum_{\ell \in \mathbb{Z} \atop \ell \neq 0} \frac{a_{\ell}}{2\pi}   \cdot \widehat{\mu_{\mathcal{O}}}(\ell)  \cdot  e^{i \ell \theta}.
\end{align*}
Using Lemma 2, the exact same argument also shows
$$ f(\theta) = \frac{2 |\partial \Omega|}{2\pi} +  \sum_{\ell \in \mathbb{Z} \atop \ell \neq 0} \frac{a_{\ell} }{2\pi} \cdot \widehat{\mu_{\partial \Omega}}(\ell)  \cdot  e^{i \ell \theta}.$$
Therefore
$$ g(\theta) - f(\theta) = \frac{4L - 2|\partial \Omega|}{2\pi} + \sum_{\ell \in \mathbb{Z} \atop \ell \neq 0}  \frac{a_{\ell}}{2\pi}  \cdot \left( \widehat{\mu_{\mathcal{O}}}(\ell) - \widehat{\mu_{\partial \Omega}}(\ell) \right) \cdot  e^{i \ell \theta} $$
from which we deduce that
$$ \int_{0}^{2\pi} (g(\theta) - f(\theta))^2 d\theta = \frac{(4L - 2|\partial \Omega|)^2}{2\pi} +  \sum_{\ell \in \mathbb{Z} \atop \ell \neq 0}  \frac{a_{\ell}^2}{2\pi}  \left| \widehat{\mu_{\mathcal{O}}}(\ell) - \widehat{\mu_{\partial \Omega}}(\ell) \right|^2.$$
Since the measures are only supported on even Fourier frequencies and, for even frequencies $\ell$, one has $|a_{\ell}| \geq 4/\ell^2$, one deduces
\begin{align*}
     \int_{0}^{2\pi} (g(\theta) - f(\theta))^2 d\theta &\geq  \sum_{\ell \in \mathbb{Z} \atop \ell \neq 0}  \frac{a_{\ell}^2}{2\pi}  \left| \widehat{\mu_{\mathcal{O}}}(\ell) - \widehat{\mu_{\partial \Omega}}(\ell) \right|^2 \\
     &\geq \sum_{\ell \in \mathbb{Z} \atop \ell \neq 0}  \frac{1}{2\pi} \frac{16}{\ell^4} \left| \widehat{\mu_{\mathcal{O}}}(\ell) - \widehat{\mu_{\partial \Omega}}(\ell) \right|^2 \\
     &= 16 \cdot \left\| \mu_{\mathcal{O}} - \mu_{\partial \Omega} \right\|_{\dot H^{-2}}^2,
\end{align*}
where the factor of $2\pi$ was absorbed into the definition of the Fourier coefficient (to make it, as opposed to above, an inner product with respect to an orthonormal element). Finally, with Lemma 4,
\begin{align*}
    16 \cdot \left\| \mu_{\mathcal{O}} - \mu_{\partial \Omega} \right\|_{\dot H^{-2}}^2 \leq   \int_{0}^{2\pi} (g(\theta) - f(\theta))^2 d\theta \leq  8 \cdot \sqrt{L} \cdot \left(L - \frac{|\partial \Omega|)}{2}\right)^{3/2}.
\end{align*}
\end{proof}

\subsection{Proof of the Proposition}
 
\begin{proof} As above, we use the abbreviation
$$ g(\theta) =  \int_0^{2\pi} |\cos{(\theta-\alpha)}| d\mu_{\mathcal{O}}(\alpha)$$
for the maximal length of the shadow at angle $\theta$. Since $[0,1]^2$ is completely explicit,
we can compute the length of the shadow $f(\theta)$ in closed form: either by direct computation
or via Lemma 2, we arrive at 
\begin{align*}
     f(\theta) &= \frac{1}{2} \left| \cos{(\theta)} \right| +   \frac{1}{2} \left| \cos{\left(\theta- \frac{\pi}{2}\right)} \right| +  \frac{1}{2} \left| \cos{\left(\theta - \pi\right)} \right| +  \frac{1}{2} \left| \cos{\left(\theta- \frac{3\pi}{2}\right)} \right| \\
&= \left|\cos{(\theta)}\right| + \left| \sin{(\theta)} \right|.
\end{align*}
In the regime of interest, there is a further simplification
\begin{align} \label{eq:simply}
     \forall~0 \leq \theta \leq \frac{\pi}{2} \qquad f(\theta) = \sqrt{2} \left| \cos\left( t - \frac{\pi}{4} \right) \right|.
\end{align}
 Using inequality \ref{ineq:proj} simultaneously in two directions
\begin{align*}
 g(\pi/4) + g(3\pi/4) &=   \int_0^{2\pi} \left( |\cos{(\pi/4-\alpha)}| +   |\cos{(3\pi/4-\alpha)}| \right) d\mu_{\mathcal{O}}(\alpha) \\
 &\geq f(\pi/4) + f(3\pi/4) = 2 \sqrt{2}.
\end{align*}   
Now we exploit a symmetry: as it turns out 
$$ |\cos{(\pi/4-\alpha)}| +   |\cos{(3\pi/4-\alpha)}| = f(\alpha + \pi/4).$$
We define the set of angles that make at least angle $0 \leq \beta \leq \pi/4$ with the $x-$axis and the $y-$axis, this set is
$$ J_{\beta} = \left[\beta, \frac{\pi}{2} - \beta\right] \cup  \left[\frac{\pi}{2} + \beta, \pi - \beta\right] \cup 
\left[\pi + \beta, \frac{3\pi}{2} - \beta\right] \cup \left[\frac{3\pi}{2} + \beta, 2\pi - \beta\right].$$
Suppose that total length $A$ of the opaque set has an angle close to $x-$axis or $y-$axis and let the length of the complement be denoted by $B$, 
$$ A = \mu_{\mathcal{O}}\left( J_{\beta}\right) \qquad \mbox{and} \qquad B = \mu_{\mathcal{O}}\left( [0,2\pi) \setminus J_{\beta}\right).$$
A simple computation shows
$$ \max_{t } f(t+\pi/4) = \sqrt{2} \qquad \mbox{and} \qquad \max_{t \in J_{\beta}} f(t+\pi/4) = f(\beta + \pi/4).$$
Then
\begin{align*}
    2 \sqrt{2} &\leq   \int_0^{2\pi} \left( |\cos{(\pi/4-\alpha)}| +   |\cos{(3\pi/4-\alpha)}| \right) d\mu_{\mathcal{O}}(\alpha)\\ 
    &\leq \sqrt{2} A + f(\beta + \pi/4) B.
\end{align*}
Now we argue that the set is either large $A+B \leq 2 + \eta$ (in which case we are done), or it is small and $A \leq 2 + \eta  -B$ and we deduce
$$   2 \sqrt{2} \leq \sqrt{2} \left(  2 + \eta  -B \right) + f(\beta + \pi/4) B$$
which can be rewritten as
$$ B \leq \frac{\sqrt{2} }{\sqrt{2} - f(\beta+\pi/4)} \eta.$$
Recalling now that $0 \leq \beta \leq \pi/4$, we may use \eqref{eq:simply} to deduce the desired result.
\end{proof}

\end{document}